\tiny\color{gray},
\renewcommand\expandafter\subsection\expandafter{%
    \expandafter\@fb@secFB\subsection
  }%
\newcommand{\ru}[1]{\rule{0pt}{#1 em}}
\newlength\titlebox \setlength\titlebox{2.25in}
\theoremstyle{definition}
\newtheorem{definition}{Definition}
\theoremstyle{plain}
\newtheorem{theorem}{Theorem}
\newtheorem*{theorem*}{Theorem}
\newtheorem{lemma}[theorem]{Lemma}
\newtheorem{proposition}[theorem]{Proposition}
\theoremstyle{remark}
\newtheorem{remark}{Remark}
\newlength{\RoundedBoxWidth}
\newsavebox{\GrayRoundedBox}
\newenvironment{GrayBox}[1][\dimexpr\textwidth-4.5ex]%
   {\setlength{\RoundedBoxWidth}{\dimexpr#1}
    \begin{lrbox}{\GrayRoundedBox}
       \begin{minipage}{\RoundedBoxWidth}}%
   {   \end{minipage}
    \end{lrbox}
    \begin{center}
    \begin{tikzpicture}%
       \draw node[draw=black,fill=black!10,rounded corners,%
             inner sep=2ex,text width=\RoundedBoxWidth]%
             {\usebox{\GrayRoundedBox}};
    \end{tikzpicture}
    \end{center}}
\newlength{\WhiteRoundedBoxWidth}
\newsavebox{\WhiteRoundedBox}
   {\setlength{\WhiteRoundedBoxWidth}{\dimexpr#1}
    \begin{lrbox}{\WhiteRoundedBox}
       \begin{minipage}{\WhiteRoundedBoxWidth}}%
   {   \end{minipage}
    \end{lrbox}
    \begin{center}
    \begin{tikzpicture}%
       \draw node[draw=black,rounded corners,%
             inner sep=2ex,text width=\WhiteRoundedBoxWidth]%
             {\usebox{\WhiteRoundedBox}};
    \end{tikzpicture}
    \end{center}}
\title{The Probability that a Random Triangle in a Cube is Obtuse}
\author[ ]{Dominik Beck}
\affil[ ]{\small Faculty of Mathematics and Physics, Charles University, Prague}
\affil[ ]{\textit {\href{mailto:beckd@karlin.mff.cuni.cz}{beckd@karlin.mff.cuni.cz}}}
\date{\vspace{-2.3em}}
\date{\today}
\begin{document}
\maketitle
\begin{abstract}
In this paper, we derive the exact formula for the probability that three randomly and uniformly selected points from the interior of the unit cube form vertices of an obtuse triangle.
\end{abstract}

{\hfill
\begin{minipage}{0.875\textwidth}
\centerline{\scriptsize \textbf{Acknowledgements}}
{\scriptsize The study was supported by the Charles University, project GA UK No. 71224 and by Charles University Research Centre program No. UNCE/24/SCI/022. We would also like to acknowledge the Dual Trimester Program: ``Synergies between modern probability, geometric analysis and stochastic geometry'' organized by the Hausdorff Research in Bonn and the impact it had on this research}
\end{minipage}
\hfill}

\newpage
\tableofcontents

\newpage
\section{Introduction}
Let $K_d \subset \mathbb{R}^d$ be a $d$-dimensional convex body and let $\vect{X},\vect{Y},\vect{Z}\sim\mathsf{Unif}(K_d)$ be three random points selected from the interior of $K_d$ uniformly and independently. The objective of this paper is to study the \emph{obtusity probability} in $K_d$. That is, the probability $\eta(K_d)$ that the random triangle $\vect{X}\vect{Y}\vect{Z}$ is \emph{obtuse}.

\vspace{1em}
In two dimensions, there are several known results. Obtusity probability was first solved in a disk $\mathbb{B}_2$ by Woolhouse \cite{woolhouse1867some} as a corollary to the Silvester problem. Later, Langford \cite{langford1969probability} found $\eta(K_2)$ for $K_2$ being a general rectangle. The special case of $\eta(C_2)$, where $C_2$ is the unit square, is shown in Table \ref{tab:obtuseprob} below, alongside with other known results. It is a simple exercise to deduce $\eta(T_2)$, where $T_2$ is a general triangle. This is, however, not the content of this paper. The general result in not very illuminating either. Hence, the table only shows $\eta(T_2^*)$ for $T_2^*$ being an equilateral triangle.

\vspace{1em}
In higher dimensions, apart from the $d$-ball (Buchta and M\"{u}ller \cite{buchta1984random}), $\eta(\solK_d)$ is not known for any $K_d$ with $d \geq 3$. The only exception, as shown in this paper, is the obtusity probability in the unit cube $C_3$, the problem suggested by Finch in \cite{finch2018mathematical}. We got
\begin{equation}
\eta(C_3) \!=\! \frac{323338}{385875}\!-\!\frac{13G}{35}\!+\!\frac{4859 \pi }{62720}\!-\!\frac{73 \pi }{1680 \sqrt{2}}\!-\!\frac{\pi ^2}{105}\!+\!\frac{3\pi  \ln 2}{224}\!-\!\frac{3\pi  \ln(1\!+\!\sqrt{2})}{224} \approx 0.542659281427229,
\end{equation}
where $G= \sum_{n=0}^\infty\frac{(-1)^n}{(2n+1)^2}\approx 0.9159655941$ is the \emph{Catalan's constant}\index{Catalan's constant}. This result is new as far as we know.

\bgroup
\renewcommand{\ru}[1]{\rule{0pt}{#1 em}}
\newcommand{\tup}{\ru{1.3}} 
\newcommand{\tdown}{0.5} 
\begin{table}[h]
\centering
    \begin{tabular}{|c|c|>{\centering\arraybackslash}m{10.5ex}|>{\centering\arraybackslash}m{30ex}|}
	\hline
\multicolumn{2}{|c|}{$K_d$} & numerical value & $\eta(\solK_d)$\\[\tdown em]
    \hline \hline
\tup \small $\mathbb{B}_2$ & disk, \cite{woolhouse1867some} & $0.7197$ & $\textstyle \frac{9}{8} - \frac{4}{\pi^2}$ \\[\tdown em]
    \hline
\tup \small $\mathbb{B}_3$ & ball, \cite{buchta1984random} & $0.5286$ & $\textstyle \frac{37}{70}$ \\[\tdown em]
    \hline
\tup \small $C_2$  & square, \cite{langford1969probability}& $0.7252$ &     $\textstyle \frac{97}{150}+\frac{\pi}{40}$ \\[\tdown em]
    \hline
\tup \small $T_2^*$ & equilateral triangle & $0.7482$ & $\textstyle\frac{25}{4}+\frac{\pi }{12 \sqrt{3}}+\frac{393}{10} \ln\frac{\sqrt{3}}{2}$ \\[\tdown em]
    \hline
\ru{1.7} \small \begin{tabular}{c}\\[-0.4ex]$C_3$\end{tabular}  & \begin{tabular}{c}\\[-0.4ex]cube\end{tabular}& \begin{tabular}{c}\\[-1.2ex]
$0.5427$\end{tabular} &     
\begin{tabular}{c}
$\textstyle \frac{323338}{385875}\!-\!\frac{13G}{35}\!+\!\frac{4859 \pi }{62720}\!-\!\frac{73 \pi }{1680 \sqrt{2}}$\!\\[0.5ex]
$\textstyle -\frac{\pi ^2}{105}\!+\!\frac{3\pi  \ln 2}{224}\!-\!\frac{3\pi  \ln(1+\sqrt{2})}{224}$
\end{tabular}
\\[1.8em]
    \hline
\end{tabular}
\caption{Probability that a random triangle in $K_d$ is obtuse}
\label{tab:obtuseprob}
\end{table}
\egroup

\section{Preliminaries}

\subsection{Langford and related distributions}
Let $U,U',U''\sim \mathsf{Unif}(0,1)$ (independent), we define four random
variables
\begin{equation}
    \Lambda = (U'-U)(U''-U),\qquad
    \Sigma = (U-U')U,\qquad 
    \Xi = UU',\qquad
    \Omega = U(1-U).
\end{equation}
The equalities between $\Lambda,\Sigma,\Xi,\Omega$ with $U,U',U''$ have to be interpreted only in terms of distributions. That means, we will assume $\Lambda,\Sigma,\Xi,\Omega$ to be in fact independent. We say $\Lambda$ follows the \emph{Langford distribution} \cite{langford1969probability}. We call those variables as our papers' \emph{auxiliary random variables}. The probability density functions (PDFs) and the cumulative density functions (CDFs) of those are shown in Table \ref{tab:PDFCDF} below. Trivially, PDF of $U$ is $f_U(u)=1$ when $0<u<1$ and zero otherwise.

\begin{table}[H]
    \centering
    \begin{tabular}{|c|c|c|}
        \hline
        $X$ & $x$ & PDF: $f_X(x)=\frac{d}{dx}F_X(x)$ \hspace{5ex}
        CDF: $F_X(x) = \prob{X \leq x}$ \\
        \hline\hline
        \multirow{2}{*}[-3em]{$\Lambda$} & \multirow{2}{*}[-3em]{$\lambda$} & $f_\Lambda(\lambda) = \begin{cases}
 4 \argtanh\sqrt{1+4 \lambda}-4 \sqrt{1+4 \lambda}, & -\frac{1}{4}\leq \lambda <0, \\
 4 \sqrt{\lambda }-2 \ln \lambda -4, & \,\,\,\,\, 0< \lambda \leq 1,\\
 0, & \,\,\,\,\, \text{otherwise}
\end{cases}$ \\\cline{3-3}
& & $F_\Lambda(\lambda) = \begin{cases}
 0,& \,\,\,\,\,\lambda < -\tfrac14\\
 \frac{1}{3} (1-8 \lambda ) \sqrt{1+4 \lambda }+4 \lambda  \argtanh\sqrt{1+4 \lambda}, & -\frac{1}{4}\leq \lambda <0
   \\
 \tfrac13,& \,\,\,\,\lambda = 0,\\
 \frac{1}{3} \left(1-6 \lambda +8 \lambda ^{3/2}\right)-2 \lambda  \ln\lambda, & \,\,\,\,\,0< \lambda <1,\\
 1,& \,\,\,\,\lambda \geq 1.\\
\end{cases}$\\
        \hline
        \multirow{2}{*}[-3em]{$\Sigma$} & \multirow{2}{*}[-3em]{$\sigma$} & $f_\Sigma(\sigma) = \begin{cases}
 2\argtanh\sqrt{1+4 \sigma}, & -\frac{1}{4}\leq \sigma <0, \\
 -\tfrac12 \ln \sigma, & \,\,\,\,\, 0< \sigma \leq 1,\\
 0, & \,\,\,\,\, \text{otherwise}\end{cases}$ \\\cline{3-3}
 & & $F_\Sigma(\sigma) = \begin{cases}
 0,& \,\,\,\,\,\sigma < -\tfrac14\\
 \frac{1}{2} \sqrt{1+4\sigma}+2\sigma \argtanh\sqrt{1+4\sigma}, & -\frac{1}{4}\leq \sigma <0
   \\
 \tfrac12,& \,\,\,\,\sigma = 0,\\
 \frac{1}{2} (1+\sigma-\sigma\ln\sigma), & \,\,\,\,\,0\leq \sigma <1,\\
 1,& \,\,\,\,\sigma \geq 1.\\
\end{cases}$\\
        \hline
        $\Omega$ & $\omega$ & \begin{tabular}{c|c}
             $f_\Omega(\omega) = \begin{cases}
 \frac{2}{\sqrt{1-4\omega}}, & \,\,\,\,\,0\leq \omega <1/4,\\
 0,& \,\,\,\,\,\text{otherwise}\\
\end{cases}$ & $F_\Omega(\omega) =\begin{cases}
 0,& \,\,\,\,\,\omega \geq 0\\
 1-\sqrt{1-4\omega}, & \,\,\,\,\,0\leq \omega <1/4,\\
 1,& \,\,\,\,\,\omega \geq 1/4.\\
\end{cases}$ 
        \end{tabular}\\
        \hline
        $\Xi$ & $\xi$ & \begin{tabular}{c|c}
             $\!\!\!\!\!\!f_\Xi(\xi) = \begin{cases}
 -\ln\xi, & \,\,\,0\leq \xi <1,\\
 0,& \,\,\,\text{otherwise}\\
\end{cases}$ & $F_\Xi(\xi) = \begin{cases}
 0,& \,\,\,\,\,\xi \leq 0\\
 \xi(1-\ln\xi), & \,\,\,\,0< \xi <1,\\
 1,& \,\,\,\,\xi \geq 1.\\
\end{cases}$ 
        \end{tabular}\\
        \hline
    \end{tabular}
    \caption{PDFs and CDFs of auxiliary variables}
    \label{tab:PDFCDF}
\end{table}

\subsection{Reformulation of the problem in terms of random variables}

Consider a trivariate functional $\eta = \eta(\vect{X},\vect{Y},\vect{Z})$ being equal to one when the random triangle $\vect{X}\vect{Y}\vect{Z}$ is obtuse and zero otherwise. We shall call this functional the \emph{obtusity indicator}\index{obtusity indicator}. Taking expectation, we can write
\begin{equation}
\eta(\solK_d) = \prob{\vect{X}\vect{Y}\vect{Z}\text{ is obtuse}}= \expe{\eta(\vect{X},\vect{Y},\vect{Z})\mid \vect{X},\vect{Y},\vect{Z} \sim \mathsf{Unif}(K_d)}.
\end{equation}
Note that a triangle is obtuse when exactly one internal angle is obtuse. Hence, we can decompose the obtusity indicator almost surely as follows
\begin{equation}
\eta(\vect{X},\vect{Y},\vect{Z}) = \eta^*(\vect{X},\vect{Y},\vect{Z}) + \eta^*(\vect{Y},\vect{Z},\vect{X}) + \eta^*(\vect{Z},\vect{X},\vect{Y}),
\end{equation}
where we denoted $\eta^*(\vect{X},\vect{Y},\vect{Z})$ as the obtusity indicator that are equal to one when the obtuse angle is located at the first vertex $\vect{X}$. Furthermore, we can write out this indicator in terms of the characteristic function of a dot product as
\begin{equation}
\eta^*(\vect{X},\vect{Y},\vect{Z}) = \mathbbm{1}_{(\vect{Y}-\vect{X})^\top (\vect{Z}-\vect{X})<0}
\end{equation}
since $(\vect{Y}-\vect{X})^\top (\vect{Z}-\vect{X}) = \|\vect{Y}-\vect{X}\|\|\vect{Z}-\vect{X}\|\cos\alpha$, where $\alpha$ is the angle at vertex $\vect{X}$ of the triangle $\vect{X}\vect{Y}\vect{Z}$. Therefore,
\begin{GrayBox}
\vspace{-0.5em}
\begin{equation}\label{eq:etadecomp}
\eta(\vect{X},\vect{Y},\vect{Z}) = \mathbbm{1}_{(\vect{Y}-\vect{X})^\top (\vect{Z}-\vect{X})<0}+\mathbbm{1}_{(\vect{Z}-\vect{Y})^\top (\vect{X}-\vect{Y})<0}+\mathbbm{1}_{(\vect{X}-\vect{Z})^\top (\vect{Y}-\vect{Z})<0}.
\end{equation}
\end{GrayBox}

Taking expectation and by symmetry, we get for the obtusity probability
\begin{equation}
    \eta(K_d) = 3\, \prob{(\vect{Y}-\vect{X})^\top (\vect{Z}-\vect{X})<0\mid \vect{X},\vect{Y},\vect{Z} \sim \mathsf{Unif}(K_d)}.
\end{equation}
When $K_d = C_3$ (a cube), we can rewrite $\eta(C_3)$ in terms auxiliary variables. This is the method used by Langford \cite{langford1969probability} to deduce $\eta(C_2)$. In fact, for a $d$-cube $C_d$ in any dimension $d$, we may parametrize the random points $\vect{X},\vect{Y},\vect{Z}$ as
\begin{equation}
    \vect{X}=\sum_{i=1}^d X_i \vect{e}_i, \quad
    \vect{Y}=\sum_{i=1}^d Y_i \vect{e}_i, \quad
    \vect{Z}=\sum_{i=1}^d Z_i \vect{e}_i,
\end{equation}
where $X_i,Y_i,Z_i\sim\mathsf{Unif}(0,1), i=1,\ldots,d$. Hence,
\begin{equation}
(\vect{Y}-\vect{X})^\top (\vect{Z}-\vect{X}) = \sum_{i=1}^d (Y_i-X_i)(Z_i-X_i)    
\end{equation}
and thus, using our auxiliary random variables introduced in the previous section,
\begin{equation}\label{eq:intCd}
\eta(C_d) = 3\prob{\sum_{i=1}^d \Lambda_i < 0} = 3\int_{\lambda_1+\cdots+\lambda_d<0} f_\Lambda(\lambda_1)\ldots f_\Lambda(\lambda_d) \ddd\lambda_1 \cdots \dd\lambda_d,
\end{equation}
where $\Lambda_i,i=1,\ldots, d$ are independent random variables following the Langford distribution. In fact, this is exactly the integral that eanbled Langford to derive the $d=2$ case.

\subsection{Crofton Reduction Technique}
Unfortunately, we were not able to find the closed form expression of the integral in Equation \eqref{eq:intCd} with $d=3$ straightaway. The intermediate result involves dilogarithms with intricate arguments. However, there is a workaround -- the \emph{Crofton Reduction Technique} (CRT). Instead of finding the obtusity probability with vertices of the random triangle being selected from the interior of $C_3$, we can instead select those vertices from special combinations of faces, edges and vertices. Naturally, those problems are simpler to deduce. It is rather miraculous though, that by combining those result, we can get the solution of the original problem. This is the core idea of the CRT in its generalised form introduced by Ruben and Reed \cite{ruben1973more}. See Sullivan's thesis \cite{sullivan1996crofton} for a detailed introduction into CRT and for more examples.

\subsubsection{Definitions}
\begin{definition}
A polytope $A \subset \mathbb{R}^d$ of dimension $\dim A = a \in \{0,1,2,\ldots,d\}$ and $a-$volume $\volA$ is defined as a connected and finite union of $a$-dimensional simplices. We say a polytope is \textbf{flat}\index{flat domain} if $\dim \mathcal{A}(A) = \dim A$, where $\mathcal{A}(A)$ stands for the affine hull of $A$. Note that any polytope with $a=d$ is flat automatically.
\end{definition}

\begin{definition}
We denote $\mathcal{P}_a(\mathbb{R}^d)$ the set of flat polytopes of dimension $a$ in $\mathbb{R}^d$ and denote $\mathcal{P}(\mathbb{R}^d) = \bigcup_{0\leq a \leq d} \mathcal{P}_a(\mathbb{R}^d)$ the set of all flat polytopes in $\mathbb{R}^d$. Finally, we denote $\mathcal{P}_+(\mathbb{R}^d) = \mathcal{P}(\mathbb{R}^d)\setminus\mathcal{P}_0(\mathbb{R}^d)$ (flat polytopes excluding points).
\end{definition}

\begin{definition}
Let $A,B \in \mathcal{P}(\mathbb{R}^d)$ and $P: \mathbb{R}^d \times \mathbb{R}^d \to \mathbb{R}$, we denote the mean value $P_{AB} = \Ex \left[ P(\vect{X},\vect{Y}) \, |\, \vect{X}\sim \mathsf{Unif}(A), \vect{Y} \sim \mathsf{Unif}(B), \,\text{independent}\right]$. Whenever it is unambiguous, we write $P_{ab}$ where $a=\dim A$ and $b = \dim B$ instead of $P_{AB}$. If there is still ambiguity, we can add additional letters after as superscripts to distinguish between various mean values $P_{ab}$.
\end{definition}

\begin{proposition}
For any $A \in \mathcal{P}_a(\mathbb{R}^d)$ with $a> 0$, there exist \textbf{convex} $\partial_i A \in \mathcal{P}_{a-1}(\mathbb{R})$ (\textbf{sides} of $A$) such that $\partial A = \bigcup_i \partial_i A$ with pairwise intersection of $\partial_i A$ having $(a\!-\!1)$-volume equal to zero.
\end{proposition}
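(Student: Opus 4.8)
The plan is to exhibit the sides of $A$ as the closures of the cells of a hyperplane arrangement inside the affine hull $\mathcal A(A)$ — the point being that a closure of a cell of an arrangement is automatically a convex polytope. First I would work inside $\mathcal A(A)$, identified with $\mathbb R^a$, so that $\partial A$ denotes the ordinary topological boundary there. Writing $A=\bigcup_{j=1}^N S_j$ with each $S_j$ an $a$-simplex, let $\mathcal H$ be the finite collection of hyperplanes of $\mathbb R^a$ spanned by the facets of the $S_j$, so that every $S_j$ is an intersection of closed halfspaces bounded by members of $\mathcal H$. The arrangement generated by $\mathcal H$ partitions $\mathbb R^a$ into finitely many relatively open cells. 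Along any $a$-dimensional cell $C$ the strict side taken with respect to each $H\in\mathcal H$ is constant, so for every $j$ either $C\subseteq\operatorname{int}S_j$ or $C\cap S_j=\varnothing$; hence each $a$-cell is either \emph{full} ($C\subseteq\operatorname{int}A$) or \emph{empty} ($C\cap A=\varnothing$).

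Next I would examine the $(a-1)$-cells. An open $(a-1)$-cell $F$ lies in a unique $H\in\mathcal H$, is incident to exactly two $a$-cells $C_1,C_2$, and the only cells of the arrangement whose closure meets $\operatorname{relint}F$ are $F,C_1,C_2$. A short neighbourhood argument then gives the trichotomy: both of $C_1,C_2$ full $\Rightarrow F\subseteq\operatorname{int}A$; both empty $\Rightarrow F\cap A=\varnothing$; exactly one full $\Rightarrow F\subseteq\partial A$ — call such an $F$ \emph{good}, and let $F_1,\dots,F_m$ be the good cells. I would then prove $\partial A=\bigcup_{i=1}^m\overline{F_i}$: the inclusion $\supseteq$ is immediate from the trichotomy, and for $\subseteq$ a point $x\in\partial A$ lies in some $S_j$, so if it lay on no member of $\mathcal H$ it would lie in $\operatorname{int}S_j\subseteq\operatorname{int}A$; thus $x$ lies in the relative interior of some cell of dimension $\le a-1$. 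Among the $a$-cells whose closure contains $x$ there is both a full one and an empty one (otherwise a small ball about $x$, being covered by the closures of the cells through $x$, would lie in $A$, resp.\ miss $A$, contradicting $x\in\partial A$). Joining such a full cell to such an empty one by an edge-path in the connected dual graph of the central arrangement of the $H\in\mathcal H$ through $x$, some consecutive pair is separated by a good $(a-1)$-cell $F$ with $x\in\overline F$, whence $x\in\bigcup_i\overline{F_i}$.

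Finally I would set $\partial_iA:=\overline{F_i}$. Each $\overline{F_i}$ is the closure of a cell of a hyperplane arrangement contained in the bounded set $A$, hence a bounded convex polytope; it lies in a hyperplane of $\mathbb R^a$, so it is flat of dimension $a-1$, and triangulating it gives $\overline{F_i}\in\mathcal P_{a-1}(\mathbb R^d)$, convex. For $i\ne j$: if $F_i,F_j$ lie in different members of $\mathcal H$, then $\overline{F_i}\cap\overline{F_j}$ is contained in the intersection of two distinct hyperplanes, of dimension $\le a-2$; if they lie in the same hyperplane, then, being distinct open cells, $\overline{F_i}\cap\overline{F_j}$ is a union of cells of dimension $\le a-2$. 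In either case the $(a-1)$-volume of $\overline{F_i}\cap\overline{F_j}$ vanishes, and $\bigcup_i\partial_iA=\partial A$, as required.

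The hard part will be the middle step — pinning down $\partial A$ exactly, and in particular ruling out spurious contributions from cells of dimension $\le a-2$ that are not faces of any good $(a-1)$-cell. This is the only genuinely topological point and is handled by the local (central) arrangement path argument above; everything else is routine bookkeeping about hyperplane arrangements, convexity, and triangulation.
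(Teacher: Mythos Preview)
Your argument is correct. The hyperplane-arrangement approach is sound: the full/empty dichotomy for $a$-cells follows because each $S_j$ is cut out by halfspaces bounded by members of $\mathcal H$; the trichotomy for $(a-1)$-cells is justified by the local description $B\subseteq F\cup C_1\cup C_2$ of a small ball around a point of $\operatorname{relint}F$; and the gallery/path argument in the central arrangement at $x$ correctly produces a good wall through any boundary point. The closure $\overline{F_i}$ of a good cell is bounded (since $F_i\subseteq\partial A$ and $A$ is bounded), convex, $(a-1)$-dimensional, and flat, so it lies in $\mathcal P_{a-1}(\mathbb R^d)$ after triangulation; the pairwise-intersection estimate is immediate from the cell structure.

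As for comparison: the paper does not actually prove this proposition. It is stated as a structural fact about finite unions of simplices and then used to set up the weights in Definition~\ref{def:weights}; no argument is given. So your proof is not an alternative to the paper's --- it \emph{is} a proof where the paper offers none. The arrangement method you chose is a clean and fully rigorous way to do it; a slightly quicker (but less self-contained) route would be to invoke a simplicial subdivision of the pair $(A,\partial A)$ coming from any triangulation of $\bigcup_j S_j$ compatible with all the $S_j$, and take the $(a-1)$-simplices lying in $\partial A$ as the sides. Your approach has the advantage of making convexity of the pieces automatic without any further subdivision.

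One cosmetic remark: you might make explicit that $A$ is closed (finite union of closed simplices), since you use $\partial A\subseteq A$ and $\overline{C_{\text{full}}}\subseteq A$ at several points; you do use it implicitly, but stating it up front removes any doubt.
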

\begin{remark}
The sides of three dimensional polytopes (polyhedra) are called \textbf{faces}.
\end{remark}
\begin{definition}
Let $A \in \mathcal{P}_+(\mathbb{R}^d)$. Let $\uvect{n}_i$ be the outer normal unit vector of $\partial_i A$ in $\mathcal{A}(A)$, then we define a \textbf{signed distance}\index{signed distance} $h_\vect{C}(\partial_i A)$ from a given point $\vect{C} \in \mathcal{A}(A)$ to $\partial_i A$ as the dot product $\vect{v}_i^\top \uvect{n}_i $, where $\vect{v}_i = \vect{x}_i - \vect{C}$ and $x_i \in \partial_i A$ arbitrary. Note that if $A$ is convex, the signed distance coincides with the \textbf{support function}\index{support function} $h(A - \vect{C},\uvect{n}_i)$ defined for any convex domain $B$ as $h(B,\uvect{n}_i) = \sup_{\vect{b} \in B} \vect{b}^\top \uvect{n}_i $.
\end{definition}

\begin{definition}
\label{def:weights}
Let $A \in \mathcal{P}_+(\mathbb{R}^d)$ with $a = \dim A$. Even though $\partial A \notin \mathcal{P}(\mathbb{R}^d)$, we still define $P_{\partial A \, B}$ for a given point (called the scaling point\index{scaling point}) $\vect{C} \in \mathcal{A}(A)$ as a weighted mean via the relation
\begin{equation}
    P_{\partial A\, B} = \sum_i w_i P_{\partial_i A\, B} 
\end{equation}
with weights $w_i$ (may be also negative) equal to
\begin{equation}
    w_i = \frac{\vol{\partial_i A}}{a \volA} h_\vect{C}(\partial_i A).
\end{equation}
\end{definition}

\begin{definition}
Let $P: \mathbb{R}^d \times \mathbb{R}^d \to \mathbb{R}$. We say the functional $P$ is \textbf{homogeneous}\index{homogeneous functional} of order $p \in \mathbb{R}$, if there exists $\tildee{P}: \mathbb{R}^d \to \mathbb{R}$ such that $P(\vect{x},\vect{y}) = \tildee{P}(\vect{x}-\vect{y})$ for all $\vect{x},\vect{y} \in \mathbb{R}^d$ and $\tildee{P}(r v) = r^p \tildee{P}(v)$ for all $v \in \mathbb{R}^d$ and all $r > 0$. We write $p = \dim P$ (although $p$ might not be an integer). We say $P$ is \textbf{symmetric} if $P(\vect{x},\vect{y})=P(\vect{y},\vect{x})$. Finally, if $P$ is a functional of two points, we say it is \textbf{bivariate}\index{homogeneous functional!bivariate}. If it depends of more points, we say it is \textbf{multivariate}\index{homogeneous multivariate}.
\end{definition}

\subsubsection{Bivariate Crofton Reduction Technique}

\begin{lemma}[Bivariate Crofton Reduction Technique] Let $P: \mathbb{R}^d\times \mathbb{R}^d \to \mathbb{R}$ be homogeneous of order $p$ and $A,B \in \mathcal{P}(\mathbb{R}^d)$, then for any $\vect{C} \in \mathcal{A}(A)\cap \mathcal{A}(B)$ it holds\index{Crofton Reduction Technique!bivariate}
\begin{equation}
p P_{AB} = a (P_{\partial A\, B} - P_{AB}) + b (P_{A\,\partial B} - P_{AB}).
\end{equation}
\begin{figure}[h]
    \centering
     \includegraphics[height=0.18\textwidth]{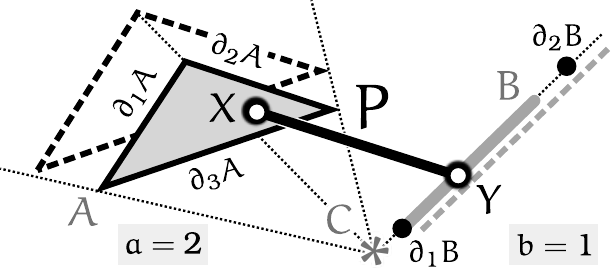}
    \caption{Bivariate Crofton Reduction Technique}
    \label{fig:Crofton}
\end{figure}
\end{lemma}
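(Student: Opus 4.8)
The plan is to derive the identity from the homogeneity of $P$ by dilating both polytopes about the common point $\vect{C}$ and equating two expressions for the derivative, at dilation factor $1$, of the resulting family of double integrals: one coming from the scaling law that homogeneity imposes, the other from the transport theorem applied to the moving domains, which is what produces the boundary averages $P_{\partial A\,B}$ and $P_{A\,\partial B}$.

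Concretely, after fixing isometric identifications of $\mathcal{A}(A)$ and $\mathcal{A}(B)$ with $\mathbb{R}^{a}$ and $\mathbb{R}^{b}$ sending $\vect{C}$ to the origin, I would put $A_s = \vect{C} + s(A-\vect{C})$, $B_t = \vect{C} + t(B-\vect{C})$ for $s,t>0$, and set
\begin{equation}
F(s,t) = \int_{A_s}\!\int_{B_t} P(\vect{x},\vect{y})\ddd\vect{y}\ddd\vect{x},
\end{equation}
so $F(1,1) = \volA\,\volB\,P_{AB}$. Along the diagonal $s=t$, the substitution $\vect{x} = \vect{C} + t(\vect{u}-\vect{C})$ with $\vect{u}\in A$ and $\vect{y} = \vect{C} + t(\vect{v}-\vect{C})$ with $\vect{v}\in B$ contributes Jacobian factors $t^{a}$ and $t^{b}$ — and it is here that the hypothesis $\vect{C}\in\mathcal{A}(A)\cap\mathcal{A}(B)$ is needed, so that the dilations stay inside the affine hulls — while homogeneity gives $P(\vect{x},\vect{y}) = \tildee{P}(\vect{x}-\vect{y}) = \tildee{P}\big(t(\vect{u}-\vect{v})\big) = t^{p}P(\vect{u},\vect{v})$; hence $F(t,t) = t^{\,a+b+p}F(1,1)$. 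Differentiating in $t$ at $t=1$ and using the chain rule on the left yields
\begin{equation}\label{eq:crt-bivar-plan}
\partial_s F(1,1) + \partial_t F(1,1) = (a+b+p)\,F(1,1).
\end{equation}

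Next I would evaluate the two partial derivatives via the transport theorem. Dilating $A$ about $\vect{C}$ moves a boundary point $\vect{x}\in\partial_i A$ with velocity $\vect{x}-\vect{C}$, whose component along the outer unit normal $\uvect{n}_i$ is the signed distance $h_{\vect{C}}(\partial_i A)$ — a constant on $\partial_i A$, because displacements within $\partial_i A$ are orthogonal to $\uvect{n}_i$. Applying the transport theorem with the $s$-independent integrand $\vect{x}\mapsto\int_B P(\vect{x},\vect{y})\ddd\vect{y}$ gives
\begin{equation}
\begin{split}
\partial_s F(1,1) &= \sum_i h_{\vect{C}}(\partial_i A)\int_{\partial_i A}\!\int_B P(\vect{x},\vect{y})\ddd\vect{y}\ddd\mathcal{H}^{a-1}(\vect{x})\\
&= \sum_i h_{\vect{C}}(\partial_i A)\,\vol{\partial_i A}\,\volB\,P_{\partial_i A\,B},
\end{split}
\end{equation}
and by Definition~\ref{def:weights} the weights $w_i = \vol{\partial_i A}\,h_{\vect{C}}(\partial_i A)/(a\volA)$ collapse the last sum to $a\,\volA\,\volB\,P_{\partial A\,B}$; the symmetric computation gives $\partial_t F(1,1) = b\,\volA\,\volB\,P_{A\,\partial B}$. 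Feeding both into \eqref{eq:crt-bivar-plan} and dividing by $\volA\,\volB$ gives $aP_{\partial A\,B} + bP_{A\,\partial B} = (a+b+p)P_{AB}$, which rearranges to $pP_{AB} = a(P_{\partial A\,B}-P_{AB}) + b(P_{A\,\partial B}-P_{AB})$.

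I expect the only genuine obstacle to be the rigorous justification of the transport-theorem step in this flat, potentially lower-dimensional setting: one must check that differentiating past the integral sign is legitimate (so $P$ must be integrable and sufficiently regular near $\partial A$ and $\partial B$), make the surface integrals over the $(a\!-\!1)$- and $(b\!-\!1)$-dimensional faces precise — using that the $\partial_i A$ meet in $(a\!-\!1)$-measure zero, so that $\int_{\partial A} = \sum_i\int_{\partial_i A}$ — and treat the degenerate cases $a=0$ or $b=0$, where a point has empty boundary and contributes no boundary term, consistently with the vanishing of the coefficient $a$ or $b$. Once the transport formula is in hand, the bookkeeping with the weights $w_i$ is purely mechanical.
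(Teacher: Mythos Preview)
Your argument is correct and is precisely the standard dilation-about-a-common-point argument that underlies the Ruben--Reed extension the paper cites; the paper itself does not reproduce a proof but defers to \cite{ruben1973more} and the author's earlier work \cite{beckmeandist}, both of which proceed exactly as you outline. The only cosmetic remark is that your caveats about integrability and the degenerate cases $a=0$ or $b=0$ are appropriate but routine here, since $P$ is bounded (an indicator in the application) and the zero-dimensional case is handled by the vanishing coefficient, just as you note.
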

\begin{proof}
The formula is a special case of the extension of the Crofton theorem by Ruben and Reed \cite{ruben1973more}. See our own proof in our earlier work on mean distances here \cite{beckmeandist}.
\end{proof}

To find the expectation of $P$, in the first step, we choose $A=K$ and $B=K$. Since the affine hulls of both $A$ and $B$ fill the whole space $\mathbb{R}^d$, any point in $\mathbb{R}^d$ can be selected for $\vect{C}$. We then employ the reduction technique to express $P_{AB}$ in $P_{A'B'}$ where $A'$ and $B'$ have smaller dimensions then $A$ and $B$. The pairs of various $A'$ and $B'$ we encounter we call \textbf{configurations}. The process is repeated until the affine hull intersection of $A'$ and $B'$ is empty. In that case, we have reached an \textbf{irreducible} configuration.

\subsubsection{Multivariate Crofton Reduction Technique}
Let us instead consider multivariate functionals $P$ (dependent on more that only two points). One example is area, volume or obtusity. CRT naturally generalises.
\begin{definition}
Let $P=P(x_1,x_2,\ldots,x_n)$ be a homogenous function of $n$ points.
We define $P_{A_1 A_2\ldots A_n} = \Ex \left[ P(\vect{X}_1, \ldots ,\vect{X}_n) \, |\, \vect{X}_1\sim \mathsf{Unif}(A_1),\ldots, \vect{X}_n\sim \mathsf{Unif}(A_n)\right]$, where $A_j,j=1,\ldots,n$ are flat domains from which the points $\vect{X}_j$ are selected randomly uniformly (according to distribution $\mathsf{Unif}(A_j)$).
\end{definition}

\begin{lemma}[Multivariate Crofton Reduction Technique] Let $P: (\mathbb{R}^d)^n \to \mathbb{R}$ be homogeneous of order $p$ and $A_1,\ldots,A_n \in \mathcal{P}(\mathbb{R}^d)$, $a_i = \dim A_i$, then for any $\vect{C} \in \bigcap_{1\leq i \leq n} \mathcal{A}(A_i)$ (scaling point) it holds\index{Crofton Reduction Technique!multivariate}
\begin{equation}\label{eq:CRTmulti}
\begin{split}
p P_{A_1 A_2 \ldots A_n} & = a_1 (P_{\partial A_1\, A_2 \ldots A_n} - P_{A_1 \ldots A_n}) + a_2 (P_{A_1\partial A_2\ldots A_n} - P_{A_1\ldots A_n})\\
& + \cdots + a_n (P_{A_1 A_2 \ldots \partial A_n} - P_{A_1 \ldots A_n}).
\end{split}
\end{equation}
\end{lemma}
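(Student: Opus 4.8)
\emph{Proof proposal.} The plan is to mimic the classical derivation of the Crofton formula: express the left-hand side $p\,P_{A_1\ldots A_n}$ as the derivative at $t=1$ of the mean of $P$ over simultaneously \emph{dilated} copies of the domains, and then evaluate that same derivative by differentiating through the $n$-fold integral.

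Fix a scaling point $\vect{C}\in\bigcap_i\mathcal{A}(A_i)$ and, for $t>0$, let $A_i^{(t)}=\vect{C}+t(A_i-\vect{C})$ be the dilation of $A_i$ about $\vect{C}$. Since $\vect{C}\in\mathcal{A}(A_i)$, each $A_i^{(t)}$ is again a flat polytope of dimension $a_i$, with $\vol A_i^{(t)}=t^{a_i}\vol A_i$ and with sides $\vect{C}+t(\partial_j A_i-\vect{C})$. Put
\[
g(t)=P_{A_1^{(t)}\ldots A_n^{(t)}}=\frac{1}{\prod_i \vol A_i^{(t)}}\int_{A_1^{(t)}}\!\!\cdots\int_{A_n^{(t)}} P(\vect{x}_1,\ldots,\vect{x}_n)\,\dd\vect{x}_1\cdots\dd\vect{x}_n .
\]
Performing in each variable the $a_i$-dimensional change of variables $\vect{x}_i=\vect{C}+t(\vect{y}_i-\vect{C})$, with Jacobian $t^{a_i}$, and using that $P$ is translation invariant and homogeneous of order $p$, so that $P(\vect{C}+t(\vect{y}_1-\vect{C}),\ldots,\vect{C}+t(\vect{y}_n-\vect{C}))=t^{p}P(\vect{y}_1,\ldots,\vect{y}_n)$, the Jacobian factors cancel the volume factors and one obtains $g(t)=t^{p}P_{A_1\ldots A_n}$. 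Hence $g'(1)=p\,P_{A_1\ldots A_n}$.

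Next I would compute $g'(1)$ directly. Writing $g(t)=I(t)/V(t)$ with $V(t)=\prod_i t^{a_i}\vol A_i$, the $V$-part contributes $-\big(\sum_i a_i\big)P_{A_1\ldots A_n}$. The $I$-part is differentiated domain by domain: varying only the $i$-th moving domain is a Reynolds transport (``Crofton'') computation inside the affine hull $\mathcal{A}(A_i)$. The boundary point $\vect{x}_i$ of $A_i^{(t)}$ moves with velocity $\vect{x}_i-\vect{C}$, whose component along the outer unit normal $\uvect{n}_j$ of the side $\partial_j A_i$ is the constant $h_{\vect{C}}(\partial_j A_i)$ (the signed distance is constant along a flat side). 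So the $t$-derivative of the $i$-th integration picks up $\sum_j h_{\vect{C}}(\partial_j A_i)\int_{\partial_j A_i}(\cdots)$; dividing by the volume normalisers and recognising the weights $w_j=\frac{\vol\partial_j A_i}{a_i\,\vol A_i}h_{\vect{C}}(\partial_j A_i)$ of Definition \ref{def:weights}, this term is exactly $a_i\,P_{A_1\ldots\partial A_i\ldots A_n}$. Summing over $i$ gives $g'(1)=\sum_i a_i\big(P_{A_1\ldots\partial A_i\ldots A_n}-P_{A_1\ldots A_n}\big)$, and equating with $p\,P_{A_1\ldots A_n}$ yields \eqref{eq:CRTmulti}. (Alternatively, one may simply note that \eqref{eq:CRTmulti} is the special case of the Ruben–Reed extension of Crofton's theorem \cite{ruben1973more}, exactly as for the bivariate lemma.)

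I expect the only delicate point to be the boundary-flux step: justifying differentiation under the integral sign for the moving domain, treating the lower-dimensional ($a_i<d$) domains where ``volume'' is $a_i$-dimensional Hausdorff measure and the flux lives on the $(a_i-1)$-dimensional sides, and observing that the pairwise intersections of the sides $\partial_j A_i$ — and the non-smooth edges where sides meet — have $(a_i-1)$-measure zero and so contribute nothing. For polytopes this is routine, since the moving region is a piecewise-linear family and the transport theorem can be applied on each side separately; everything else is bookkeeping with the homogeneity of $P$ and the definition of the weighted mean $P_{A_1\ldots\partial A_i\ldots A_n}$.
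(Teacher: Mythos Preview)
Your argument is correct: the dilation $g(t)=t^{p}P_{A_1\ldots A_n}$ combined with a side-by-side Reynolds transport computation, matched against the weights of Definition~\ref{def:weights}, yields exactly \eqref{eq:CRTmulti}, and your identification of the boundary-flux step as the only technical point is accurate.

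As for comparison: the paper does not actually supply a proof of this lemma. The multivariate statement is left unproved, evidently as a straightforward extension of the bivariate version, which itself is disposed of by a citation to Ruben and Reed \cite{ruben1973more} and to the author's earlier work. Your parenthetical alternative --- ``this is the special case of the Ruben--Reed extension, exactly as for the bivariate lemma'' --- is therefore precisely the paper's own stance. The scaling/transport argument you wrote out is the standard way to prove such Crofton identities and is presumably what underlies the cited references; you have simply made explicit what the paper leaves implicit.
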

\begin{remark}
Symmetry of in points $\vect{X}_1,\ldots,\vect{X}_n$ is not required for CRT to hold. However, we often assume so. As a result, $P_{A_1,\ldots,A_n}$ is invariant with respect to permutations of $A_1,\ldots,A_n$.
\end{remark}

\subsection{Reduction of a Cube}

\subsubsection{Configurations}
Consider a trivariate symmetric homogeneous functional $P$ of order $p$ dependent on three random points picked uniformly from the unit cube $C_3$ with volume $\vol_3 C_3 = 1$ and let $P_{abc} = \expe{P(\vect{X},\vect{Y},\vect{Z})\mid \vect{X} \sim \mathsf{Unif}(A),\vect{Y}\sim \mathsf{Unif}(B), \vect{Z}\sim \mathsf{Unif}(C)}$, where $a = \dim A$, $b = \dim B$, $c = \dim C$ and the concrete selection of $A,B,C$ is deduced from the reduction diagram in Figure \ref{fig:TrianglePickingCube} below.

\begin{figure}[h]
    \centering
     \includegraphics[width=0.98\textwidth]{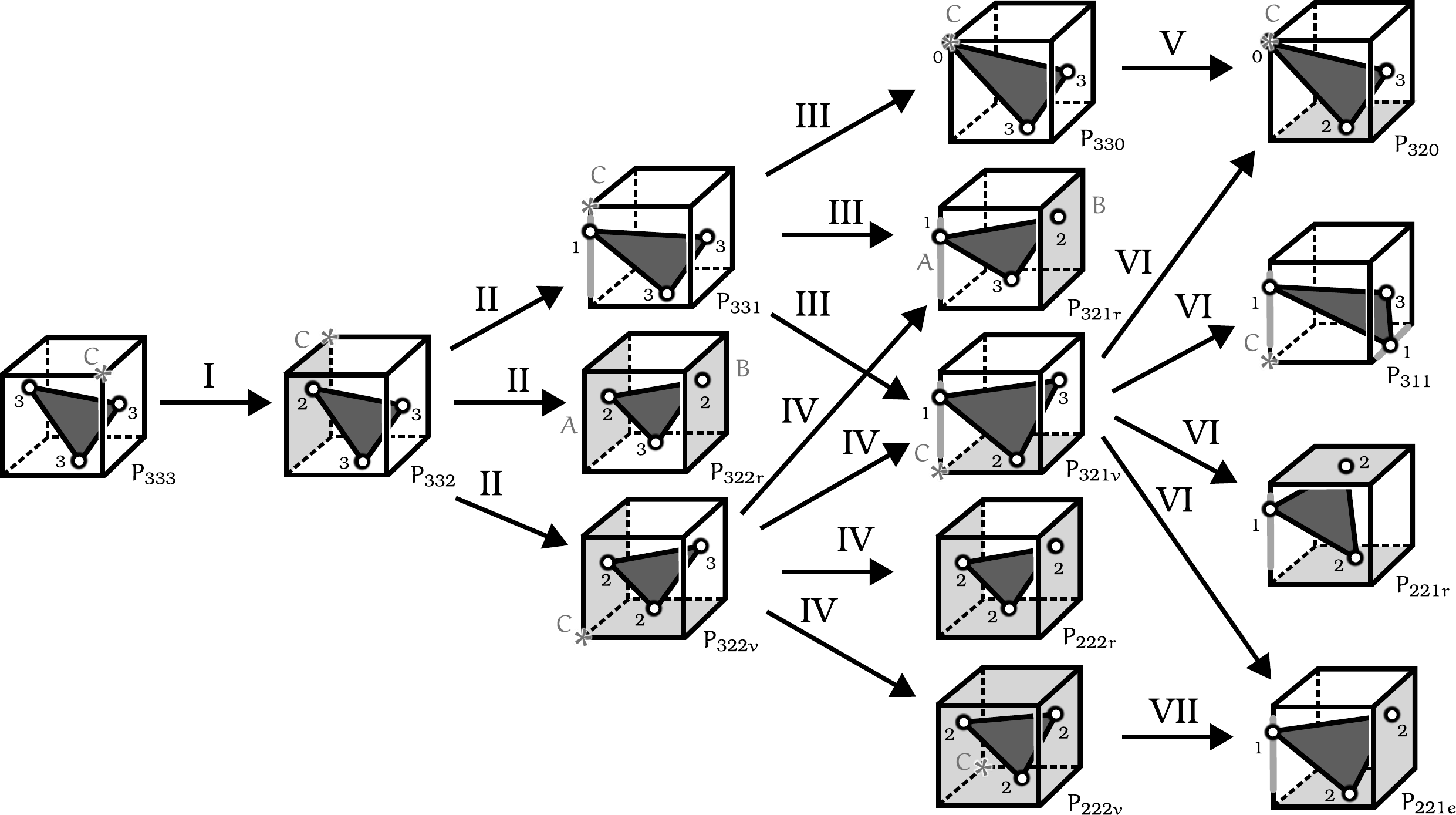}
    \caption{All different $P_{abc}$ sub-configurations in $C_3$}
    \label{fig:TrianglePickingCube}
\end{figure}

\subsubsection{Reduction system}
The full system obtained by CRT is
\begin{align*}
    \mathbf{I} & :\, p P_{333} \,\,= 3\cdot 3 (P_{332} - P_{333})\\
    \mathbf{II} & :\, p P_{332} \,\,= 2(P_{331}-P_{332})+2\cdot3(P_{322}-P_{332}),\\
    \mathbf{III} & :\, p P_{331} \,\,= 1(P_{330}-P_{331}) + 2\cdot3 (P_{321} - P_{331}),\\
    \mathbf{IV} & :\, p P_{322v} = 2\cdot2 (P_{321}' - P_{322v}) + 3(P_{222}-P_{322v}),\\
    \mathbf{V} & :\, p P_{330} \,\,= 2\cdot 3(P_{320}-P_{330}),\\
    \mathbf{VI} & :\, p P_{321v} = 1(P_{320}-P_{321v})+2(P_{311}-P_{321v})+3(P_{221}-P_{321v}),\\
    \mathbf{VII} & :\, p P_{222v} = 3\cdot2(P_{221e}-P_{222v})\\
\end{align*}
with
\begin{align*}
    P_{322} & = \tfrac13 P_{322r}+\tfrac23 P_{322v},\\
    P_{321} & = \tfrac23 P_{321r}+\tfrac13 P_{321v},\\
    P_{321}' & = \tfrac12 P_{321r}+\tfrac12 P_{321v},\\
    P_{222} & = \tfrac23 P_{222r}+\tfrac13 P_{222v},\\
    P_{221} & = \tfrac13 P_{221r}+\tfrac23 P_{221e}.\\
\end{align*}
The solution of our system is
\begin{equation}\label{Eq:CRTcube}
    P_{333} = \frac{108 (4 P_{221e}+P_{221r}+2 P_{311}+2 P_{320})}{(6+p) (7+p) (8+p) (9+p)}+\frac{72 (P_{222r}+2 P_{321r})}{(7+p) (8+p)
   (9+p)}+\frac{18 P_{322r}}{(8+p) (9+p)}.
\end{equation}
Especially, for functionals of order $p=0$, we get
\begin{equation}
P_{333}  = \frac{1}{28} (4 P_{221e}+P_{221r}+4 P_{222r}+2 P_{311}+2 P_{320}+8 P_{321r}+7 P_{322r}).
\end{equation}

\newpage
\section{Obtusity probability as a reducible functional}

We can use CRT to reduce the obtusity probability $\eta(\solK_d)$. Note that $\eta = \eta(\vect{X},\vect{Y},\vect{Z})$ is symmetric, trivarite and homogeneous of order zero. In our convention,
\begin{equation}
\eta(\solK_d)=\eta_{\solK_d\solK_d\solK_d} = \eta_{ddd} = \expe{\eta(\vect{X},\vect{Y},\vect{Z})\mid \vect{X},\vect{Y},\vect{Z} \sim \mathsf{Unif}(K_d)}.
\end{equation}
In a given configuration $\vect{X}\sim\mathsf{Unif}(A),\vect{Y}\sim\mathsf{Unif}(B), \vect{Z}\sim\mathsf{Unif}(C)$, we write
\begin{equation}
    \eta_{ABC} = \expe{\eta(\vect{X},\vect{Y},\vect{Z})\mid \vect{X}\sim\mathsf{Unif}(A),\vect{Y}\sim\mathsf{Unif}(B), \vect{Z}\sim\mathsf{Unif}(C)}.
\end{equation}
Additionally, we indicate by ${}^*$ the position of the obtuse vertex, so
\begin{equation}
\begin{split}
& \eta_{A^*BC} = \expe{\eta^*(\vect{X},\vect{Y},\vect{Z}) \mid \vect{X}\sim\mathsf{Unif}(A),\vect{Y}\sim\mathsf{Unif}(B), \vect{Z}\sim\mathsf{Unif}(C)}\\
& = \prob{ (\vect{Y}-\vect{X})^\top (\vect{Z}-\vect{X})<0\mid \vect{X}\sim\mathsf{Unif}(A),\vect{Y}\sim\mathsf{Unif}(B), \vect{Z}\sim\mathsf{Unif}(C)},
\end{split}
\end{equation}
similarly for $\eta_{AB^*C}$ and $ \eta_{ABC^*}$. Hence, we may write the expected value of the obtusity indicator in any configuration as
\begin{equation}
    \eta_{ABC} = \eta_{A^*BC} + \eta_{AB^*C} + \eta_{ABC^*}.
\end{equation}

\phantomsection
\subsection{\texorpdfstring{$\bm{\eta}$\textsubscript{322r}}{eta322r}}
In configuration $(332r)$, the first vertex $\vect{X}$ of the inscribed random triangle $\vect{X}\vect{Y}\vect{Z}$ is selected from the interior of $C_3$, while the other two $\vect{Y}$ and $\vect{Z}$ are picked from (any fixed) opposite faces. We may parametrize the points as
\begin{equation}
    \vect{X}=X_1\vect{e}_1+X_2\vect{e}_2+X_3\vect{e}_3, \quad \vect{Y}=Y_1\vect{e}_1+Y_2\vect{e}_2, \quad \vect{Z}=Z_1\vect{e}_1+Z_2\vect{e}_2+\vect{e}_3,
\end{equation}
where $X_1,X_2,X_3,Y_1,Y_2,Z_1,Z_2\sim\mathsf{Unif}(0,1)$. Based on the exact location of the obtuse angle, we recognize three sub-configurations $(3^*22r)$, $(32^*2r)$ and $(322^*r)$, out of which the last two give the same contribution by symmetry. Expressing the dot products in the decomposition of the obtusity indicator (Equation \eqref{eq:etadecomp}), we get
\begin{equation}
\begin{split}
(3^*22r)& : (\vect{Y}\!-\!\vect{X})^\top (\vect{Z}\!-\!\vect{X}) \!=\! (Y_1\!-\!X_1)(Z_1\!-\!X_1)\!+\!(Y_2\!-\!X_2)(Z_2\!-\!X_2)\!-\!X_3(1\!-\!X_3),\\
(32^*2r)& : (\vect{Z}\!-\!\vect{Y})^\top (\vect{X}\!-\!\vect{Y}) \!=\! (X_1\!-\!Y_1)(Z_1\!-\!Y_1)\!+\!(X_2\!-\!Y_2)(Z_2\!-\!Y_2) \!+\! X_3.
\end{split}
\end{equation}
The obtusity probability in $(3^*22r)$ sub-configuration in terms of auxiliary random variables is given as
\begin{equation}
\begin{split}
\eta_{3^*22r} & =\prob{(\vect{Y}-\vect{X})^\top (\vect{Z}-\vect{X})<0}=\prob{\Lambda+\Lambda'-\Omega<0} \\
& = \int_0^{1/4}\int_{-1/4}^{\omega+1/4} \int_{-1/4}^{\omega-\lambda} f_\Lambda(\lambda) f_\Lambda(\lambda') f_\Omega(\omega) \ddd\lambda'\dd\lambda\dd \omega \\
& = \int_0^{1/4}\int_{-1/4}^{\omega+1/4} f_\Lambda(\lambda) F_\Lambda(\omega-\lambda) f_\Omega(\omega) \ddd \lambda\dd \omega.  
\end{split}
\end{equation}
Unfortunately, the leftover integral is far from trivial and even \emph{Mathematica} is unable to find its closed form solution straightaway. Nevertheless, via simple Weierstrass substitution, the integral can be decomposed into linear combination of special integrals recently discussed on \emph{MSE} website \cite{MSEacoth,MSEatanh}, via which
\begin{equation}
    \eta_{3^*22r} \!=\!\frac{6739}{6750}-\frac{8 G}{15}+\frac{211 \pi }{1440}-\frac{17 \pi }{252 \sqrt{2}}-\frac{\pi ^2}{45}-\frac{\pi \ln(1\!+\!\sqrt2)}{24}+\frac{\pi \ln 2}{24} \approx 0.576363509,
\end{equation}
where $G= \sum_{n=0}^\infty\frac{(-1)^n}{(2n+1)^2}\approx 0.9159655941$ is the \emph{Catalan's constant}\index{Catalan's constant}. Somehow, the situation is much more elementary in $(32^*2r)$ configuration. With PDF of $U$ being $f_{U}(u) = \mathbbm{1}_{u\in(0,1)}$, we have
\begin{equation}
\begin{split}
\eta_{32^*2r} & =\prob{(\vect{X}-\vect{Y})^\top (\vect{Z}-\vect{Y})<0} =\prob{\Lambda+\Lambda'+U<0} \\
& = \int_{-1/4}^{1/4}\int_{-1/4}^{-\lambda}\int_0^{-\lambda-\lambda'} f_\Lambda(\lambda) f_\Lambda(\lambda') f_U(u) \ddd u \dd \lambda' \dd \lambda \\
& = \int_{-1/4}^{1/4} F_\Lambda(-\lambda) F_\Lambda(\lambda) \ddd \lambda = \frac{121}{7350}+\frac{\pi }{2688} \approx 0.0176313323.
\end{split}
\end{equation}
Lastly, by symmetry, $\eta_{322^*r}=\eta_{32^*2r}$. Summing up the three obtusity probabilities,
\begin{equation}
\begin{split}
     &\eta_{322r} = \eta_{3^*22r} + \eta_{32^*2r} + \eta_{322^*r} = \eta_{3^*22r} + 2\eta_{32^*2r} = \frac{341101}{330750}-\frac{8 G}{15}+\frac{2969 \pi }{20160} \\
    &
    -\frac{17 \pi }{252 \sqrt{2}}-\frac{\pi^2}{45}+\frac{\pi\ln 2}{24}-\frac{\pi \ln(1+\sqrt{2})}{24}  \approx 0.611626173665235356686.
\end{split}
\end{equation}

\phantomsection
\subsection{\texorpdfstring{$\bm{\eta}$\textsubscript{321r}}{eta321r}}
In configuration $(321r)$, vertex $\vect{X}$ is selected from the interior of $C_3$ and $\vect{Y}$ and $\vect{Z}$ are picked from one face and its opposite edge, respectively. We may parametrize the points as
\begin{equation}
    \vect{X}=X_1\vect{e}_1+X_2\vect{e}_2+X_3\vect{e}_3, \quad \vect{Y}=Y_1\vect{e}_1+Y_2\vect{e}_2, \quad \vect{Z}=Z_1\vect{e}_1+\vect{e}_3,
\end{equation}
where $X_1,X_2,X_3,Y_1,Y_2,Z_1\sim\mathsf{Unif}(0,1)$. Based on the exact location of the obtuse angle, we recognize three sub-configurations $(3^*21r)$, $(32^*1r)$ and $(321^*r)$. Expressing the dot products in the decomposition of the obtusity indicator (Equation \eqref{eq:etadecomp}), we get
\begin{equation}
\begin{split}
(3^*21r)& : (\vect{Y}\!-\!\vect{X})^\top (\vect{Z}\!-\!\vect{X}) \!=\! (Y_1\!-\!X_1)(Z_1\!-\!X_1)\!+\!(X_2\!-\!Y_2)X_2-\!X_3(1\!-\!X_3),\\
(32^*1r)& : (\vect{Z}\!-\!\vect{Y})^\top (\vect{X}\!-\!\vect{Y}) \!=\! (X_1\!-\!Y_1)(Z_1\!-\!Y_1)\!+\!(Y_2\!-\!X_2)Y_2 \!+\! X_3,\\
(321^*r)& : (\vect{X}\!-\!\vect{Z})^\top (\vect{Y}\!-\!\vect{Z}) \!=\! (X_1\!-\!Z_1)(Y_1\!-\!Z_1)+X_2Y_2+1- X_3.
\end{split}
\end{equation}
Using the auxiliary random variables, we can write the obtusity probability in $(3^*21r)$ sub-configuration as
\begin{equation}
\begin{split}
\eta_{3^*21r} & =\prob{(\vect{Y}-\vect{X})^\top (\vect{Z}-\vect{X})<0}=\prob{\Lambda+\Sigma-\Omega<0} \\
& = \int_0^{1/4}\int_{-1/4}^{\omega+1/4} \int_{-1/4}^{\omega-\lambda} f_\Lambda(\lambda) f_\Sigma(\sigma) f_\Omega(\omega) \ddd\sigma\dd\lambda\dd \omega \\
& = \int_0^{1/4}\int_{-1/4}^{\omega+1/4} f_\Lambda(\lambda) F_\Sigma(\omega-\lambda) f_\Omega(\omega) \ddd \lambda\dd \omega.  
\end{split}
\end{equation}
By using the \emph{MSE} integrals, we get
\begin{equation}
    \eta_{3^*21r} =\frac{49043}{54000}-\frac{8 G}{15}+\frac{1567 \pi }{11520}-\frac{67 \pi }{720 \sqrt{2}}-\frac{\pi ^2}{240}+\frac{\pi  \ln 2}{192}-\frac{\pi\ln (1+\sqrt2)}{96} \approx 0.5816795685,
\end{equation}
Next, in $(32^*1r)$ configuration, we have
\begin{equation}
\begin{split}
\eta_{32^*1r} & =\prob{(\vect{X}-\vect{Y})^\top (\vect{Z}-\vect{Y})<0} =\prob{\Lambda+\Sigma+U<0} \\
& = \int_{-1/4}^{1/4}\int_{-1/4}^{-\lambda}\int_0^{-\lambda-\sigma} f_\Lambda(\lambda) f_\Sigma(\sigma) f_U(u) \ddd u \dd \sigma \dd \lambda \\
& = \int_{-1/4}^{1/4} F_\Lambda(-\lambda) F_\Sigma(\lambda) \ddd \lambda = \frac{37}{1176}+\frac{\pi }{1344} \approx 0.03380008.
\end{split}
\end{equation}
At last, in $(321^*r)$, configuration, since $1-X_3\sim\mathsf{Unif}(0,1)$, we get
\begin{equation}
\begin{split}
\eta_{321^*r} & =\prob{(\vect{X}-\vect{Z})^\top (\vect{Y}-\vect{Z})<0} =\prob{\Lambda+\Xi+U<0} \\
& = \int_{-1/4}^{0}\int_0^{-\lambda}\int_0^{-\lambda-\xi} f_\Lambda(\lambda) f_\Xi(\xi) f_U(u) \ddd u \dd \xi \dd \lambda \\
& = \int_0^{1/4} F_\Lambda(-\lambda) F_\Xi(\lambda) \ddd \lambda = \frac{43}{14700} \approx 0.00292517.
\end{split}
\end{equation}
Summing up the three obtusity probabilities we obtained in all sub-configurations,
\begin{equation}
\begin{split}
    \eta_{321r} & = \eta_{3^*21r} + \eta_{32^*1r} + \eta_{321^*r} =
    \frac{2494097}{2646000}-\frac{8 G}{15}+\frac{11029 \pi }{80640}-\frac{67 \pi }{720 \sqrt{2}}\\
    &-\frac{\pi ^2}{240}+\frac{\pi \ln 2}{192} -\frac{\pi  \ln(1+\sqrt{2})}{96}   \approx 0.61840481814327429018.
\end{split}
\end{equation}

\phantomsection
\subsection{\texorpdfstring{$\bm{\eta}$\textsubscript{222r}}{eta222r}}
In configuration $(222r)$, vertices $\vect{Y}$ and $\vect{Z}$ are selected from opposite faces of $C_3$ and $\vect{X}$ is selected from another face in between the two. We may parametrize the points as
\begin{equation}
    \vect{X}=X_2\vect{e}_2+X_3\vect{e}_3, \quad \vect{Y}=Y_1\vect{e}_1+Y_2\vect{e}_2, \quad \vect{Z}=Z_1\vect{e}_1+Z_2\vect{e}_2+\vect{e}_3,
\end{equation}
where $X_2,X_3,Y_1,Y_2,Z_1,Z_2\sim\mathsf{Unif}(0,1)$. Based on the exact location of the obtuse angle, we recognize three sub-configurations $(2^*22r)$, $(22^*2r)$ and $(222^*r)$, the last last two of which give the same contribution by symmetry. Expressing the dot products in the decomposition of the obtusity indicator, we get
\begin{equation}
\begin{split}
(2^*22r)& : (\vect{Y}\!-\!\vect{X})^\top (\vect{Z}\!-\!\vect{X}) \!=\!Y_1Z_1\!+\! (Y_2\!-\!X_2)(Z_2\!-\!X_2)\!-\!X_3(1\!-\!X_3),\\
(22^*2r)& : (\vect{Z}\!-\!\vect{Y})^\top (\vect{X}\!-\!\vect{Y}) \!=\! (Y_1\!-\!Z_1)Y_1\!+\!(X_2\!-\!Y_2)(Z_2\!-\!Y_2) \!+\! X_3.
\end{split}
\end{equation}
Using auxiliary random variables, we can write the obtusity probability in $(2^*22r)$ sub-configuration as
\begin{equation}
\begin{split}
\eta_{2^*22r} & =\prob{(\vect{Y}-\vect{X})^\top (\vect{Z}-\vect{X})<0}=\prob{\Xi+\Lambda-\Omega<0} \\
& = \int_0^{1/4}\int_{-1/4}^{\omega} \int_0^{\omega-\lambda} f_\Xi(\xi) f_\Lambda(\lambda) f_\Omega(\omega) \ddd\xi\dd\lambda\dd \omega \\
& = \int_0^{1/4}\int_{-1/4}^{\omega} F_\Xi(\omega-\lambda) f_\Lambda(\lambda) f_\Omega(\omega) \ddd \lambda\dd \omega.  
\end{split}
\end{equation}
By using the \emph{MSE} integrals, we get
\begin{equation}
    \eta_{2^*22r} =\frac{14393}{27000}-\frac{2 G}{15}+\frac{11 \pi }{1152}-\frac{\pi ^2}{72}+\frac{\pi\ln 2}{96}\approx 0.326548524.
\end{equation}
Next, in $(22^*2r)$ configuration,
\begin{equation}
\begin{split}
\eta_{22^*2r} & =\prob{(\vect{X}-\vect{Y})^\top (\vect{Z}-\vect{Y})<0} =\prob{\Sigma+\Lambda+U<0} = \eta_{32^*1r} \\
& = \frac{37}{1176}+\frac{\pi }{1344} \approx 0.03380008.
\end{split}
\end{equation}
At last, $\eta_{222^*r}=\eta_{22^*2r}$ by symmetry. Summing up the three obtusity probabilities,
\begin{equation}
\begin{split}
     & \eta_{222r} = \eta_{2^*22r} + \eta_{22^*2r} + \eta_{222^*r} = \eta_{2^*22r} + 2\eta_{22^*2r}
      \\
    & = \frac{788507}{1323000}-\frac{2 G}{15}+\frac{89 \pi }{8064}-\frac{\pi ^2}{72}+\frac{\pi  \ln 2}{96} \approx 0.39414868337494.
\end{split}
\end{equation}

\phantomsection
\subsection{\texorpdfstring{$\bm{\eta}$\textsubscript{320}}{eta320}}
In configuration $(320)$, $\vect{X}$ is selected from the interior of $C_3$, $\vect{Y}$ is selected from a face and $\vect{Z}$ in one of the vertices opposite to the selected face. We may parametrize the points as
\begin{equation}
    \vect{X}=X_1\vect{e}_1+X_2\vect{e}_2+X_3\vect{e}_3, \quad \vect{Y}=Y_1\vect{e}_1+Y_2\vect{e}_2, \quad \vect{Z}=\vect{e}_3,
\end{equation}
where $X_1,X_2,X_3,Y_1,Y_2\sim\mathsf{Unif}(0,1)$. Based on the exact location of the obtuse angle, we recognize three sub-configurations $(3^*20)$, $(32^*0)$ and $(320^*)$. Expressing the dot products in the decomposition of the obtusity indicator, we get
\begin{equation}
\begin{split}
(3^*20)& : (\vect{Y}\!-\!\vect{X})^\top (\vect{Z}\!-\!\vect{X}) \!=\!(X_1-Y_1)X_1+(X_2-Y_2)X_2-X_3(1-X_3),\\
(32^*0)& : (\vect{Z}\!-\!\vect{Y})^\top (\vect{X}\!-\!\vect{Y}) \!=\! (Y_1-X_1)Y_1+(Y_2-X_2)Y_2 + X_3,\\
(320^*)& : (\vect{X}\!-\!\vect{Z})^\top (\vect{Y}\!-\!\vect{Z}) \!=\! X_1Y_1+X_2Y_2+1-X_3,\\
\end{split}
\end{equation}
Using auxiliary random variables, we can write the obtusity probability in $(3^*20)$ sub-configuration as
\begin{equation}
\begin{split}
\eta_{3^*20} & =\prob{(\vect{Y}-\vect{X})^\top (\vect{Z}-\vect{X})<0}=\prob{\Sigma+\Sigma'-\Omega<0} \\
& = \int_0^{1/4}\int_{-1/4}^{\omega+1/4} \int_{-1/4}^{\omega-\sigma} f_\Sigma(\sigma) f_\Sigma(\sigma') f_\Omega(\omega) \ddd\sigma'\dd\sigma\dd \omega \\
& = \int_0^{1/4}\int_{-1/4}^{\omega+1/4} f_\Sigma(\sigma) F_\Sigma(\omega-\sigma) f_\Omega(\omega) \ddd \sigma\dd \omega.  
\end{split}
\end{equation}
By using the \emph{MSE} integrals, we get
\begin{equation}
    \eta_{3^*20} =\frac{42977}{54000}-\frac{7G}{30}-\frac{\pi ^2}{1440}\approx 0.575291173117.
\end{equation}
Next, in $(32^*0)$ configuration,
\begin{equation}
\begin{split}
\eta_{32^*0} & =\prob{(\vect{X}-\vect{Y})^\top (\vect{Z}-\vect{Y})<0} =\prob{\Sigma+\Sigma'+U<0} \\
& = \int_{-1/4}^{1/4}\int_{-1/4}^{-\sigma}\int_0^{-\sigma-\sigma'} f_\Sigma(\sigma) f_\Sigma(\sigma') f_U(u) \ddd u \dd \sigma' \dd \sigma \\
& = \int_{-1/4}^{1/4} F_\Sigma(-\sigma) F_\Sigma(\sigma) \ddd \sigma = \frac{23}{450} \approx 0.0511111.
\end{split}
\end{equation}
At last, in $(320^*)$, configuration, since $1-X_3\sim\mathsf{Unif}(0,1)$, we get trivially
\begin{equation}
\eta_{320^*} =\prob{(\vect{X}-\vect{Z})^\top (\vect{Y}-\vect{Z})<0} =\prob{\Xi+\Xi'+U<0} =0.
\end{equation}
Summing up the three obtusity probabilities we got,
\begin{equation}
\begin{split}
    \eta_{320} & = \eta_{3^*20} + \eta_{32^*0} + \eta_{320^*} =
    \frac{45737}{54000}-\frac{7G}{30}-\frac{\pi ^2}{1440} \approx 0.6264022842.
\end{split}
\end{equation}

\phantomsection
\subsection{\texorpdfstring{$\bm{\eta}$\textsubscript{311}}{eta311}}
In configuration $(311)$, $\vect{X}$ is selected from the interior of $C_3$ and $\vect{Y}$ and $\vect{Z}$ are selected from perpendicular edges which do not share a common vertex. We may parametrize the points as
\begin{equation}
    \vect{X}=X_1\vect{e}_1+X_2\vect{e}_2+X_3\vect{e}_3, \quad \vect{Y}=Y_3\vect{e}_3, \quad \vect{Z}=\vect{e}_1+Z_2\vect{e}_2,
\end{equation}
where $X_1,X_2,X_3,Y_3,Z_2\sim\mathsf{Unif}(0,1)$. Based on the exact location of the obtuse angle, we recognize three sub-configurations $(3^*11)$, $(31^*1)$ and $(311^*)$, out of which the last two give the same contribution. Expressing the dot products in the decomposition of the obtusity indicator, we get
\begin{equation}
\begin{split}
(3^*11)& : (\vect{Y}\!-\!\vect{X})^\top (\vect{Z}\!-\!\vect{X}) =-(1\!-\!X_1)X_1+(X_2\!-\!Z_2)X_2-(X_3\!-\!Y_3)X_3,\\
(31^*1)& : (\vect{Z}\!-\!\vect{Y})^\top (\vect{X}\!-\!\vect{Y}) = X_1+X_2Z_2+(Y_3-X_3)Y_3,\\
\end{split}
\end{equation}
Using auxiliary random variables, we can write the obtusity probability in $(3^*20)$ sub-configuration as
\begin{equation}
\begin{split}
\eta_{3^*11} & =\prob{(\vect{Y}-\vect{X})^\top (\vect{Z}-\vect{X})<0}=\prob{-\Omega+\Sigma+\Sigma'<0} = \eta_{3^*20} \\
& = \frac{42977}{54000}-\frac{7G}{30}-\frac{\pi ^2}{1440}\approx 0.575291173117.  
\end{split}
\end{equation}
Next, in $(31^*1)$ configuration with PDF of $U$ being $f_{U}(u) = \mathbbm{1}_{u\in(0,1)}$, we have
\begin{equation}
\begin{split}
\eta_{31^*1} & =\prob{(\vect{X}-\vect{Y})^\top (\vect{Z}-\vect{Y})<0} =\prob{U+\Xi+\Sigma<0} \\
& = \int_{-1/4}^{0}\int_0^{-\sigma}\int_0^{-\sigma-\xi} f_U(u) f_\Xi(\xi) f_\Sigma(\sigma) \ddd u \dd \xi \dd \sigma \\
& = \int_0^{1/4} F_\Sigma(-\sigma) F_\Xi(\sigma) \ddd \sigma = \frac{17}{1800} \approx 0.00944444.
\end{split}
\end{equation}
At last, by symmetry, $\eta_{311^*}=\eta_{31^*1}$. Summing up the three obtusity probabilities,
\begin{equation}
\begin{split}
    \eta_{320} & = \eta_{3^*20} + \eta_{32^*0} + \eta_{320^*} =
    \frac{43997}{54000}-\frac{7 G}{30}-\frac{\pi ^2}{1440} \approx 0.5941800620.
\end{split}
\end{equation}

\phantomsection
\subsection{\texorpdfstring{$\bm{\eta}$\textsubscript{221r}}{eta221r}}
In configuration $(221r)$, $\vect{X}$ and $\vect{Y}$ are selected from opposite faces of $C_3$ while $\vect{Z}$ is selected from one of the edges connecting them. We may parametrize the points as
\begin{equation}
    \vect{X}=X_1\vect{e}_1+X_2\vect{e}_2, \quad \vect{Y}=Y_1\vect{e}_1+Y_2\vect{e}_2+\vect{e}_3, \quad \vect{Z}=Z_3\vect{e}_3,
\end{equation}
where $X_1,X_2,Y_1,Y_2,Z_3\sim\mathsf{Unif}(0,1)$. Based on the exact location of the obtuse angle, we recognize three sub-configurations $(2^*21r)$, $(22^*1r)$ and $(221^*r)$, out of which the first two give the same contribution by symmetry. Expressing the dot products in the decomposition of the obtusity indicator, we get
\begin{equation}
\begin{split}
(2^*21r)& : (\vect{Y}\!-\!\vect{X})^\top (\vect{Z}\!-\!\vect{X}) =(X_1\!-\!Y_1)X_1+(X_2\!-\!Y_2)X_2+Z_3,\\
(221^*r)& : (\vect{X}\!-\!\vect{Z})^\top (\vect{Y}\!-\!\vect{Z}) = X_1Y_1+X_2Y_2+Z_3(1-Z_3),\\
\end{split}
\end{equation}
Using auxiliary random variables, we can write the obtusity probability in $(2^*21r)$ sub-configuration as
\begin{equation}
\eta_{2^*21r} =\prob{(\vect{Y}\!-\!\vect{X})^\top (\vect{Z}\!-\!\vect{X})<0}=\prob{\Sigma+\Sigma'-U<0} = \eta_{32^*0} = \frac{23}{450}.
\end{equation}
By symmetry, $\eta_{22^*1r} = \eta_{2^*21r}$. Finally, in $(221^*r)$ configuration,
\begin{equation}
\begin{split}
& \eta_{221^*r} =\prob{(\vect{X}-\vect{Z})^\top (\vect{Y}-\vect{Z})<0} =\prob{\Xi+\Xi'-\Omega<0} \\
& = \int_0^{\frac14}\!\int_0^{\frac14-\xi}\!\!\int_{\xi+\xi'}^{\frac14} f_\Xi(\xi) f_\Xi(\xi') f_\Sigma(\sigma) \dd \sigma\dd \xi' \dd \xi \!=\!\! \int_0^{\frac14}\!\!\int_0^{\frac14-\xi} \!\ln\xi\ln\xi' \sqrt{1\!-\!4(\xi\!+\!\xi')} \ddd\xi'\dd\xi \\
& = \!\int_0^{\frac14} \frac{(1\!-\!4 \xi )^{3/2}}{18} (3 \ln (1\!-\!4 \xi )\!-\!8) \ln \xi \ddd\xi = \frac{788}{3375}\!-\!\frac{\pi ^2}{120} \approx 0.151234778.
\end{split}
\end{equation}
Summing up the three obtusity probabilities,
\begin{equation}
\begin{split}
    \eta_{221r} & = \eta_{2^*21r} + \eta_{22^*1r} + \eta_{221^*r} =
    \frac{1133}{3375}-\frac{\pi ^2}{120} \approx 0.2534570004.
\end{split}
\end{equation}

\phantomsection
\subsection{\texorpdfstring{$\bm{\eta}$\textsubscript{221e}}{eta221e}}
In the last irreducible configuration $(221e)$, $\vect{X}$ and $\vect{Y}$ are selected from adjacent faces of $C_3$ while $\vect{Z}$ is selected from an edge opposite to the face on which $\vect{Y}$ reside. We may parametrize the points as
\begin{equation}
    \vect{X}=X_1\vect{e}_1+X_2\vect{e}_2, \quad \vect{Y}=\vect{e}_1+Y_2\vect{e}_2+Y_3\vect{e}_3, \quad \vect{Z}=Z_3\vect{e}_3,
\end{equation}
where $X_1,X_2,Y_2,Y_3,Z_3\sim\mathsf{Unif}(0,1)$. Based on the exact location of the obtuse angle, we recognize three sub-configurations $(2^*21e)$, $(22^*1e)$ and $(221^*e)$. Expressing the dot products in the decomposition of the obtusity indicator, we get
\begin{equation}
\begin{split}
(2^*21e)& : (\vect{Y}\!-\!\vect{X})^\top (\vect{Z}\!-\!\vect{X}) =-X_1(1-X_1)+(X_2-Y_2)X_2+Y_3Z_3,\\
(22^*1e)& : (\vect{X}\!-\!\vect{Y})^\top (\vect{Z}\!-\!\vect{Y}) =1-X_1+(Y_2-X_2)Y_2+(Y_3-Z_3)Y_3,\\
(221^*e)& : (\vect{X}\!-\!\vect{Z})^\top (\vect{Y}\!-\!\vect{Z}) = X_1+X_2Y_2+(Z_3-Y_3)Z_3,\\
\end{split}
\end{equation}
Using auxiliary random variables, we can write the obtusity probability in $(2^*21e)$ sub-configuration as
\begin{equation}
\begin{split}
\eta_{2^*21e} & =\prob{(\vect{Y}-\vect{X})^\top (\vect{Z}-\vect{X})<0}=\prob{-\Omega+\Sigma+\Xi<0} \\
& = \int_0^{1/4}\int_{-1/4}^{\omega} \int_0^{\omega-\sigma} f_\Omega(\omega) f_\Sigma(\sigma) f_\Xi(\xi)  \ddd\xi\dd\sigma\dd \omega \\
& = \int_0^{1/4}\int_{-1/4}^{\omega} f_\Omega(\omega) f_\Sigma(\sigma) F_\Xi(\omega-\sigma) \ddd \sigma\dd \omega.  
\end{split}
\end{equation}
By using the \emph{MSE} integrals, we get
\begin{equation}
    \eta_{2^*21e} =\frac{32629}{54000}-\frac{7G}{30}-\frac{\pi ^2}{360}\approx 0.3630998677.
\end{equation}
Next, in $(22^*1e)$ configuration, we have since $1-X_1\sim \mathsf{Unif}(0,1)$,
\begin{equation}
\eta_{22^*1e} =\prob{(\vect{X}\!-\!\vect{Y})^\top (\vect{Z}\!-\!\vect{Y})<0} =\prob{U\!+\!\Sigma\!+\!\Sigma'\!<\!0} = \eta_{32^*0} = \frac{23}{450}.
\end{equation}
Finally, in $(221^*e)$ configuration,
\begin{equation}
\eta_{221^*e} =\prob{(\vect{X}\!-\!\vect{Z})^\top (\vect{Y}\!-\!\vect{Z})<0} =\prob{U\!+\!\Xi\!+\!\Sigma\!<\!0} = \eta_{31^*1} = \frac{17}{1800}.
\end{equation}
Summing up the three obtusity probabilities,
\begin{equation}
\begin{split}
    \eta_{221r} & = \eta_{2^*21r} + \eta_{22^*1r} + \eta_{221^*r} =
    \frac{35899}{54000}-\frac{7 G}{30}-\frac{\pi ^2}{360} \approx 0.4236554232.
\end{split}
\end{equation}

\phantomsection
\subsection{\texorpdfstring{$\bm{\eta}$\textsubscript{333}}{eta333}}
Inserting $\eta_{221e}$, $\eta_{221r}$, $\eta_{222r}$, $\eta_{311}$, $\eta_{320}$, $\eta_{321r}$ and $\eta_{322r}$ into Equation \eqref{Eq:CRTcube} with $P = \eta$, for which $p=0$, we finally obtain
\begin{equation}
\begin{split}
  & \eta(C_3) = \eta_{333}  = \frac{1}{28} (4 \eta_{221e}\!+\!\eta_{221r}\!+\!4 \eta_{222r}\!+\!2 \eta_{311}\!+\!2 \eta_{320}\!+\!8 \eta_{321r}\!+\!7 \eta_{322r})\\
  & = \frac{323338}{385875}\!-\!\frac{13G}{35}\!+\!\frac{4859 \pi }{62720}\!-\!\frac{73 \pi }{1680 \sqrt{2}}\!-\!\frac{\pi ^2}{105}\!+\!\frac{3\pi  \ln 2}{224}\!-\!\frac{3\pi  \ln(1\!+\!\sqrt{2})}{224}\\[1ex]
  & \approx 0.54265928142722907450111187258177267165716732602495\ldots,
\end{split}
\end{equation}
which is a natural generalization of Langford's $\eta(C_2)$ \cite{langford1969probability}.

\newpage
\section{Final remarks}

We believe that $\eta(K_3)$ can be found in an exact form for $K_3$ being a box with any side-lengths. CRT equations will be adjusted by the support function weights, but apart of that the calculation should remain similar. Also, since we were successful in dimension three, we wonder whether it is possible to calculate $\eta(C_d)$ for $d \leq 4$ and $\eta(K_3)$ for other three-dimensional polyhedra.

\begin{acknowledgment}{Acknowledgments.}
The work presented is part of my PhD thesis, which I am writing in Prague under the supervision of Jan Rataj. I owe a great deal to his patience and invaluable guidance throughout my whole PhD programme. I deeply appreciate the freedom I was granted, which allowed me to focus fully on my research. I would also like to thank Anna Gusakova and Zakhar Kabluchko for the fruitful discussions we had during my stay in Münster, which greatly contributed to the development of my work.
\end{acknowledgment}

\newpage

\printbibliography[heading=bibintoc]

\end{document}